\theoremstyle{plain}
\newtheorem{thm}{Theorem}[section]
\newtheorem{lem}[thm]{Lemma}
\newtheorem{prop}[thm]{Proposition}
\newtheorem{cor}[thm]{Corollary}
\theoremstyle{definition}
\newtheorem{defn}[thm]{Definition}
\theoremstyle{remark}
\newtheorem*{remark}{Remark}
\newcommand{\R}{\mathbb{R}}
\newcommand{\M}{\mathcal{M}}
\renewcommand{\P}{\mathcal{P}}
\begin{document}

\title[Designs on compact algebraic manifolds]{Asymptotically optimal designs 
on compact algebraic manifolds}
\author{Uju\'e Etayo, Jordi Marzo and Joaquim Ortega-Cerd\`{a}}

%\author{Uju\'e Etayo}
\address{Departamento de Matem\'aticas, Estad\'{\i}stica y Computaci\'on, 
Universidad de Cantabria,  Avd. Los Castros 44, 39005 Santander, Spain}
\email{mariadeujue.etayo@unican.es}

%\author{Jordi Marzo}
\address{Departament de Matem\`atiques i Inform\`atica, Universitat de 
Barcelona \& Bar\-ce\-lo\-na Graduate School of Mathematics, Gran Via 585, 
08007, Barcelona, Spain}
\email{jmarzo@ub.edu}

%\author{Joaquim Ortega-Cerd\`a}
\address{Departament de Matem\`atiques i Inform\`atica, Universitat de 
Barcelona \& Bar\-ce\-lo\-na Graduate School of Mathematics, Gran Via 585, 
08007, Barcelona, Spain}
\email{jortega@ub.edu}

\date{\today{}}
\thanks{This research has been partially supported by MTM2014-57590-P and 
MTM2014-51834-P grants by the Ministerio de Econom\'{\i}a y Competitividad, 
Gobierno de Espa\~na and by the Ge\-ne\-ra\-li\-tat de Catalunya (project 2014 
SGR 289).}

\begin{abstract}
We find $t$-designs on compact algebraic manifolds with a number of points 
comparable to the dimension of the space of polynomials of degree $t$ on the 
manifold. This generalizes results on the sphere by Bondarenko, Radchenko and 
Viazovska in \cite{BRV13}. Of special interest is the particular case of the 
Grassmannians where our results improve the bounds that had been proved 
previously.
\end{abstract}

\maketitle

% \hrulefill
% \tableofcontents
% \hrulefill

\section{Introduction}

Given a real affine algebraic manifold $\M$ endowed with the normalized Lebesgue 
measure $\mu_{\M}$, we say that a collection of $N$ points $x_1,\ldots, x_N\in 
\M$ is a $t$-design, also called averaging set or Chebyshev quadrature 
formula,  if 
\[
\int_\M P(x)\, d\mu_{\M}(x) = \frac 1{N} \sum_{i= 1}^N P(x_i),
\]
for all polynomials $P(x)$ of total degree less or equal than $t$. 

The results for Chebyshev quadrature on the interval are classic, see for 
example the survey paper by Gautschi and the commentaries by 
Korevaar, \cite{Gau14}.

In the case of the sphere, these sets are known as spherical designs and were 
introduced by Delsarte, Goethals and Seidel \cite{DGS77}  in the context of 
algebraic combinatorics on spheres. 

Since then, spherical designs have gained popularity in different areas of 
mathematics, ranging over geometry, algebraic and geometric combinatorics, and 
numerical analysis. See for instance the review \cite{BB09} by Bannai and 
Bannai or the more recent review of Brauchart and Grabner 
\cite{Brauchart2015293}.

Very early the notion of $t$-designs was considered in other contexts beyond the 
sphere. The projective designs were introduced by Hoggard and further 
investigated by Lyubich and Shalatova, \cite{LS04}, as a tool 
to embed isometrically $\ell^2(\R^n)$ in $\ell^{2t}(\R^m)$.

When they are separated, the points in $t$-designs tend to be evenly 
distributed along the sphere or the projective space, and they are close to optimal 
for other problems, like minimization of Newtonian energy as observed in 
\cite{HL08} and \cite{BRV15}. 

For this reason, in order to have evenly distributed subspaces, the 
$t$-designs on Grassmannians were considered by  Bachoc, Coulageon and Nebe in 
\cite{BCN02}. In \cite{BEG16} Breger, Ehler and Gr\"af make some numerical study of approximation problems in 
Grassmannians and make the observation that 
two possible notions of 
designs in Grassmannians (one of them using polynomials as we did and another 
when one replaces polynomials by eigenvectors of the Laplacian as in 
\cite{BCN02}) are indeed related. 

The existence of designs in a very general setting was proved by Seymour and 
Zaslavsky in \cite{SZ84}, see also \cite{A88}. One challenging problem is to 
find designs with as few points as possible for every $t$.

In \cite{Kane15} Kane studied the existence of designs in path-connected spaces. In particular
for the Grasmmanian $G(k, \mathbb{R}^n)$, 
linear subspaces of $\mathbb{R}^n$ of dimension $k,$
he proved that one can find $t$-designs with a number of points of order $O(t^{2k(n−k)})$ 
and he conjectures that it should be of order $O(t^{k(n−k)})$ since this is 
the dimension of the corresponding space of polynomials.

A breakthrough on the existence of designs of size of optimal asymptotic order
was obtained in \cite{BRV13}, where the conjecture by 
Korevaar and Meyers was settled. There are $t$-designs with $O(t^d)$ points in 
the $d$-dimensional sphere. This is asymptotically the best rate possible. The 
proof is obtained by a fixed point theorem, thus it is not constructive. On the 
other hand their method is very flexible, so they could improve it in 
\cite{BRV15} to obtain separated spherical designs with the same cardinality.

As a further evidence of the flexibility of their method we are going to adapt 
it to the setting of a general algebraic manifold and obtain sharp estimates on 
the number of points needed for a $t$-design. The two main ingredients that are 
needed to adapt their method to this setting are the construction of area 
regular partitions in manifolds (this has been recently proved by Gigante and 
Leopardi in \cite{gigante2015diameter}) and the existence of a sampling-type 
inequalities (Marcinkiewicz-Zygmund inequalities) for polynomials for sequences of 
points sufficiently close and separated. This last ingredient essentially follows from a 
Bernstein-type inequality for polynomials in algebraic varieties proved in 
\cite{berman2015sampling}. If we specialize our results to the Grassmannians we 
confirm the conjecture in \cite{Kane15}.

%===============================================================================
%===============================================================================
%===============================================================================

\section{Definitions and Main results}

Let $\M$ be a smooth, connected and compact affine algebraic manifold of dimension 
$d$ in $\R^n$ 
\[
\M=\left\{  x\in \R^n \;: \; p_1(x)=\cdots =p_r(x)=0 \right\},
\] 
where $p_{1},\ldots, p_{r}\in \mathbb{R}[X]$ are polynomials with real 
coefficients and the normal space at $x\in \M,$ generated by $\nabla 
p_1(x),\ldots ,\nabla p_r(x),$ is of dimension $n-d$. We consider in $\M$ the 
$d-$dimensional Hausdorff measure (i.e. the Lebesgue measure) $\mu_{\M}$, 
normalized by $\mu_{\M}(\M) = 1$. We denote as $d(x,y)$ the geodesic distance 
between $x,y \in \M$.

Let $X \subset \mathbb{C}^{n}$ be the complexification of $\M$, i.e. the 
complex zero set of the real polynomials $p_{1}(x),\ldots, p_{r}(x)$. The Lebesgue measure in $X$ will be denoted by $\mu_X$.

The space of real algebraic polynomials on $\M$ of total degree at most $t$,  
denoted by $\mathcal{P}_t=\mathcal{P}_t(\M)$ is the restriction to $\M$ of the 
space of real polynomials in $n$ variables.  The dimension of the space 
$\mathcal{P}_t(\M)$ is given by the Hilbert polynomial and it satisfies:
\[
\operatorname{dim}\mathcal{P}_t(\M)=\operatorname{deg}(\M) t^d+O(t^{d-1}).
\]

Let $\P_t^0$ be the Hilbert space of polynomials  in $\mathcal{P}_t$ with zero 
mean 
\begin{equation*}
\int_{\M} P(x)\, d\mu_{\M}(x)=0,
\end{equation*}
with respect to the usual inner product 
\begin{equation*}
\langle  P,Q \rangle
=  \int_{\M} P(x)Q(x) d\mu_{\M}(x),
\end{equation*}
this space has a reproducing kernel i.e. for each $x \in \M$ there exists a  
unique polynomial $K_{x} \in \P_t^0$ 
such that
\begin{equation*}			\label{eq_4}
\langle Q, K_{x} \rangle = Q(x),
\end{equation*}
for all $Q \in \P_t^0$.

It is clear that $x_{1},\ldots,x_{N} \in \M$ is a $t$-design if and only if
\begin{equation}					\label{eq_5}
\sum_{i=1}^{N} K_{x_{i}} 
= 0.
\end{equation}

The existence of designs was already proved in a very general setting in 
\cite{SZ84}, our aim is to show, adapting the techniques from \cite{BRV13}, 
that one can reach the right order, given by the following result.

%======================================================================

\begin{prop}
If $x_1,\ldots ,x_N\in \M$ is a $t$-design in $\M$, then $N\gtrsim t^d$. 
\end{prop}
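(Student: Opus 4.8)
The plan is to run the classical Delsarte--Goethals--Seidel counting argument, recast in the reproducing-kernel language of the excerpt. First I would halve the degree: set $s=\lfloor t/2\rfloor$ and let $K(x,y)$ be the reproducing kernel of the \emph{full} space $\mathcal{P}_s(\M)$ (not the zero-mean subspace $\P_s^0$), which exists because $\mathcal{P}_s(\M)$ is a finite-dimensional Hilbert space on which point evaluation is continuous. Writing $K(x,y)=\sum_{j=1}^{D}e_j(x)e_j(y)$ for an orthonormal basis $\{e_j\}$ of $\mathcal{P}_s(\M)$, with $D=\dim\mathcal{P}_s(\M)$, the two facts I would exploit are the reproducing identity $\int_\M K(x,y)^2\,d\mu_\M(x)=K(y,y)$ (apply $\langle f,K(\cdot,y)\rangle=f(y)$ to $f=K(\cdot,y)$) and the trace identity $\int_\M K(x,x)\,d\mu_\M(x)=\sum_j\|e_j\|^2=D$.

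The heart of the argument is that both $x\mapsto K(x,y)^2$ and $x\mapsto K(x,x)$ are polynomials on $\M$ of degree at most $2s\le t$, so the design hypothesis applies to each. From the reproducing identity and the design property I get, for every fixed $y$, the identity $K(y,y)=\tfrac1N\sum_{i=1}^N K(x_i,y)^2$. Specializing $y=x_j$, summing over $j$, and discarding the off-diagonal terms $i\ne j$ (each of which is a square, hence nonnegative) yields $\sum_j K(x_j,x_j)\ge \tfrac1N\sum_i K(x_i,x_i)^2$. In parallel, the trace identity together with the design property applied to $K(x,x)$ gives $\sum_i K(x_i,x_i)=ND$.

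I would close the loop with Cauchy--Schwarz, $\sum_i K(x_i,x_i)^2\ge \tfrac1N\big(\sum_i K(x_i,x_i)\big)^2$. Setting $S=\sum_i K(x_i,x_i)=ND$ and combining the three relations gives $S\ge \tfrac{1}{N^2}S^2$, that is $N^2\ge S=ND$, whence $N\ge D=\dim\mathcal{P}_s(\M)$. Invoking the Hilbert-polynomial asymptotics $\dim\mathcal{P}_s(\M)=\deg(\M)\,s^d+O(s^{d-1})$ with $s=\lfloor t/2\rfloor$ then yields $N\gtrsim t^d$. I do not expect a serious obstacle; the only points needing care are the degree bookkeeping (so that $2s\le t$ and the hypothesis is legitimately applied to $K(\cdot,y)^2$ and to the diagonal $K(\cdot,\cdot)$) and the observation that the full space $\mathcal{P}_s(\M)$ carries a reproducing kernel, which is automatic in finite dimensions. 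No further geometry of $\M$ beyond the stated dimension estimate enters.
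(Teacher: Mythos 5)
Your argument is correct and lands on exactly the same intermediate bound as the paper, namely $N\ge\dim\mathcal{P}_s(\M)$ with $s=\lfloor t/2\rfloor$, but it gets there by a genuinely different route. The paper's proof is a pure injectivity argument: from the identity $\int_\M P^2\,d\mu_{\M}=\frac1N\sum_i P(x_i)^2$ for $P\in\mathcal{P}_s$ it deduces (via a determinant and a dual basis, though it can be said in one line) that the evaluation map $\mathcal{P}_s(\M)\to\R^N$ is injective, because a polynomial vanishing at all the nodes has zero $L^2$-norm and hence is zero in $\mathcal{P}_s(\M)$; therefore $\dim\mathcal{P}_s(\M)\le N$. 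You instead run the classical Delsarte--Goethals--Seidel counting argument: the reproducing identity $K(y,y)=\frac1N\sum_i K(x_i,y)^2$, the trace identity $\sum_i K(x_i,x_i)=ND$, discarding the nonnegative off-diagonal terms, and Cauchy--Schwarz. All your steps check out, including the degree bookkeeping: both $x\mapsto K(x,y)^2$ and $x\mapsto K(x,x)$ are restrictions to $\M$ of polynomials of degree at most $2s\le t$ on $\R^n$, so the design hypothesis legitimately applies to them. The paper's route is a bit more economical --- no kernel and no Cauchy--Schwarz are needed, only that $\mu_{\M}$ has full support so that $\int_\M Q^2\,d\mu_{\M}=0$ forces $Q=0$ --- whereas your version makes the positive-semidefinite structure explicit and is the form that, on the sphere, can be refined (by keeping the off-diagonal terms and using the zonal structure of the kernel) to the sharp lower bounds $\dim\mathcal{P}_s(\mathbb{S}^d)$ and $2\binom{d+s}{d}$ quoted in the paper's remark following this proposition.
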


\begin{proof}
Let $s$ be such that $2s=t$, if $t$ is even, and such that $2s+1=t$ otherwise. 
We have that
\[
\int_\M P^2(x) d\mu_{\M}(x)=\frac{1}{N}\sum_{i=1}^N P^2(x_i),\mbox{ for } P\in 
\mathcal{P}_s.
\]
Suppose that $N<\dim \mathcal{P}_s$. Then for $P_1,\ldots ,P_N\in 
\mathcal{P}_s$ 
linearly independent we have that
\[
\det (P_j(x_i))_{i,j=1,\ldots ,N}\neq 0.
\]
Indeed, if the determinant above vanishes, there exist a non-trivial linear 
combination 
\[
 \sum_{j=1}^N \alpha_j P_j(x_i)=0,\qquad i=1,\ldots , N,
\]
and we get a contradiction from
\[
\int_\M \Bigl( \sum_{j=1}^N \alpha_j P_j(x)\Bigr)^2\, d\mu_{\M}(x)=0.
\]
As $\det (P_j(x_i))_{i,j=1,\ldots ,N}\neq 0$, there exist $Q_1,\ldots , Q_N\in 
\mathcal{P}_s$ such that
$Q_j(x_i)=\delta_{ij}$, for $i,j=1,\ldots, N$, and $Q\in  \mbox{span}\{ 
Q_1,\ldots , Q_N \}^{\perp}$. 
Then
\[
0=\int_\M Q(x) Q_j(x) d\mu_{\M}(x)=\frac{1}{N}Q(x_j),\;\;j=1,\ldots ,N
\]
and from $\int_\M Q(x)^2 d\mu_{\M}(x)=0$ we get that $Q(x)=0$.  
\end{proof}

\begin{remark}
If $x_1,\ldots ,x_N\in \M$ is a $t$-design in $\M$, for even $t$, and $N=\dim 
\mathcal{P}_{t/2}(\M)$, it is easy to see that the set of reproducing kernels 
of the space $\mathcal{P}_{t/2}(\M)$ on those points
\[
K_{t/2}(\cdot, x_j),\;j=1,\ldots N,
\] 
form an orthogonal basis of $\mathcal{P}_{t/2}(\M)$ and 
\[
K_{t/2}(x_j,x_j )=\|K_{t/2}(\cdot,x_j ) \|^2_{L^2(\M)}=N,
\] 
for all $j=1,\ldots, N$. The existence (or not) of these, so-called, tight 
designs in a variety $\M$ seems to be a difficult problem.

In the case of the sphere $\mathbb{S}^d$ the (sharp) lower bounds tell us that 
if $x_1,\ldots ,x_N\in \mathbb{S}^d$ is a $t-$design
\[
N\ge \binom{d+s}{d}+\binom{d+s-1}{d}=\dim 
\mathcal{P}_s(\mathbb{S}^d),\;\;\;\;N\ge 2\binom{d+s}{d}
\] 
for $t=2s$ and $t=2s+1$, respectively. For the sphere $\mathbb{S}^d$ there are 
a few tight spherical designs, see \cites{BD79, BD80}, for which these lower 
bound are attained. The tight spherical designs with larger cardinality are the 
kissing tight $4$-design for $\mathbb{S}^{21}$ of $275=\dim 
\mathcal{P}_2(\mathbb{S}^{21})$ points for even $t$, and the $11-$design for 
$\mathbb{S}^{23}$ of $196560$ points from the Leech lattice for odd $t$.

\end{remark}

%===============================================================================

Our main result is the following theorem where we show the existence of designs 
with cardinality $N$ for all $N\gtrsim \dim \mathcal{P}_t(\M)$.

\begin{thm}						\label{thm_6} 
There is a constant $C_{\M}$ depending only on $\M$ such that for 
each $N \geq C_{\M} t^{d}$ there are  $t$-designs in $\M$ with $N$ 
points.
\end{thm}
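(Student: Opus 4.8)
The plan is to adapt the topological fixed-point/degree argument of Bondarenko–Radchenko–Viazovska to the manifold setting. The key reformulation is the one recorded in \eqref{eq_5}: a set $x_1,\dots,x_N$ is a $t$-design precisely when $\sum_{i=1}^N K_{x_i}=0$ in $\P_t^0$. So the goal becomes showing that the map sending a configuration of points to $\sum_i K_{x_i}$ hits $0$, for every $N$ above the threshold $C_{\M}t^d$.

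First I would invoke the area-regular partition of $\M$ from \cite{gigante2015diameter}: for $N\gtrsim t^d$ one partitions $\M$ into $N$ regions $\{R_i\}$ each of measure $1/N$ and of diameter $\lesssim N^{-1/d}\sim 1/t$. Allowing each point $x_i$ to range over a region $R_i$ gives a continuous map $F\colon \prod_i R_i \to \P_t^0$, $F(x_1,\dots,x_N)=\sum_i K_{x_i}$. The strategy is to show that $F$ takes the value $0$. The heuristic is that the "barycenter" choice $x_i$ at the center of each $R_i$ already makes $\sum_i K_{x_i}$ small, because $\sum_i \frac{1}{N}K_{x_i}$ is a Riemann-sum approximation of $\int_{\M} K_x\,d\mu_{\M}(x)=0$ (each $K_x$ has zero mean). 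One then wants a degree-theoretic or fixed-point argument guaranteeing an exact zero nearby.

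The crucial analytic input is a Marcinkiewicz–Zygmund (sampling) inequality: for configurations of points that are sufficiently dense and separated at scale $1/t$, one has $\|P\|_{L^2(\M)}^2 \sim \frac{1}{N}\sum_i |P(x_i)|^2$ for $P\in\P_t$, which follows from the Bernstein-type inequality of \cite{berman2015sampling}. This controls the derivative of $F$ and shows that $F$ is a genuine perturbation of the (nondegenerate) barycentric map, so that moving each $x_i$ within its small region $R_i$ can adjust $\sum_i K_{x_i}$ in all directions of $\P_t^0$ uniformly. Concretely, the MZ inequality yields that the differential of the averaging map is surjective with a uniform lower bound, and one packages this with the partition to set up a topological degree or Brouwer-type argument on $\prod_i R_i$ (or on a suitable finite-dimensional reduction after projecting to $\P_t^0$, whose dimension is $\sim t^d\lesssim N$). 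Since each $R_i$ has diameter comparable to $1/t$, the available "room" to move each point matches the scale on which the kernels $K_x$ vary, which is exactly what makes the perturbation argument close.

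The main obstacle I expect is establishing the uniform quantitative surjectivity of the averaging map on the regions $R_i$ and turning it into a rigorous fixed-point statement; this is where the MZ inequality and the Bernstein inequality must be combined to give effective, $N$-uniform estimates (bounds on $\|K_x\|$, on $\|\nabla K_x\|$, and on the measure/diameter tradeoff of the partition) so that the error from replacing the integral by its Riemann sum is genuinely dominated by the controllable variation of $F$ over the product of regions. Verifying that these constants depend only on $\M$ (and not on $N$ or $t$) — so as to produce the single constant $C_{\M}$ in the statement — will require care in tracking how the geometry of $\M$ enters the Bernstein and sampling inequalities. Once that uniform control is in place, the topological conclusion, that $0$ lies in the image of $F$, follows as in \cite{BRV13}.
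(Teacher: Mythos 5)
Your proposal correctly identifies the main ingredients (the reformulation \eqref{eq_5}, the area-regular partition, and Marcinkiewicz--Zygmund inequalities derived from the Bernstein-type estimate of \cite{berman2015sampling}), but the way you set up the topological argument has a genuine gap, and the missing piece is precisely the heart of the Bondarenko--Radchenko--Viazovska method. You propose to run a degree/perturbation argument on the configuration space $\prod_i R_i$ for the map $F(x_1,\dots,x_N)=\sum_i K_{x_i}$, relying on ``uniform quantitative surjectivity of the differential'' of $F$. Two problems: first, $\prod_i R_i$ has dimension $Nd$, which vastly exceeds $\dim\P_t^0\sim t^d$, so Brouwer degree does not apply to $F$ on that domain and you have not specified any finite-dimensional reduction that would make it apply; second, the quantitative surjectivity estimate you defer to (``the main obstacle'') is exactly the hard content, and the $L^2$ sampling inequality you cite does not by itself produce it --- moreover the relevant comparison (moving $x_i$ within $R_i$ changes $\frac1N\sum_i K_{x_i}$ by the same order of magnitude as the Riemann-sum error of the barycentric choice) is borderline, so a naive ``small perturbation of a nondegenerate map'' argument does not close.

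The paper resolves this by turning the picture around: the degree theorem (Theorem~\ref{thm_1}) is applied on the polynomial space $\P_t^0$ itself, to the map $P\mapsto\sum_i K_{x_i(P)}$, over the bounded open set $\Omega=\{P:\int_\M|\nabla_t P|\,d\mu_\M<1\}$. The key construction (Lemma~\ref{prop_4}) is a continuous selection $P\mapsto(x_1(P),\dots,x_N(P))$ obtained by flowing the initial points $x_i\in R_i$ for a time $s_0\sim\|\mathcal R\|$ along the normalized tangential gradient field $\nabla_t P/U_\epsilon(|\nabla_t P|)$; the MZ inequality for $|P|$ controls the starting value $\frac1N\sum_i P(x_i)$ and the MZ inequality for $|\nabla_t P|$ (Corollary~\ref{cor3.6}) bounds the gain along the flow from below, yielding $\langle P,\sum_i K_{x_i(P)}\rangle=\sum_i P(x_i(P))>0$ on $\partial\Omega$. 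This only requires a one-dimensional, directional estimate for each fixed $P$, which is why it avoids the surjectivity issue entirely. Note also that on a general variety $\nabla_t P$ is not a vector of polynomials, so the gradient MZ inequality needs the additional Gram--Schmidt determinant argument of Corollary~\ref{cor3.6}; your sketch does not address this. To complete your proof you would need to supply the gradient-flow selection map (or an equivalent device) rather than the direct inversion of $F$.
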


Besides the sharp result for the sphere in \cite{BRV13}, Kuijlaars has proved 
on the torus on $\R^3$ the existence of Chebyshev quadratures with $C t^2$ points 
for polynomials of degree $t$, \cite{Kui95}. In \cite{Kane15} the author obtained 
results in the very general setting of path-connected topological spaces. His 
result in our setting provides designs for any $N\gtrsim t^{2d}$ so twice as much as in our 
result.

To prove Theorem~\ref{thm_6}, we follow the strategy of \cite{BRV13}. The main 
ingredients are a result from Brouwer degree theory and Marcinkiewicz-Zygmund 
inequalities for spaces of polynomials $\mathcal{P}_t(\M)$. In \cite{BRV13} the 
authors borrow the Marcinkiewicz-Zygmund inequalities on the sphere from 
\cite{MNW01}*{Theorem 3.1}, see also \cite{DX13}*{Theorem~6.4.4}. We will prove 
the analogue for algebraic polynomials on algebraic varieties.

To state our results we have to define area regular partitions.

A finite family of closed sets $R_{1},\ldots,R_{N}\subset \M$ is an area 
regular partition of $\M$ if 
\[
\mu_{\M}(R_i)=1/N,\;\;\bigcup_{i=1}^N R_i=\M,\;\;\mbox{and}\;\;\mu_{\M}(R_i\cap 
R_j)=0\;\;\mbox{for}\;\;i\neq j.
\]

\noindent The diameter of the partition $\mathcal{R} = \{ R_{1},\ldots,R_{N} \}$
is
\begin{equation*}						\label{eq_43}
\|\mathcal{R}\| = \max_{i=1,\ldots ,N} \max_{x,y\in R_{i}} d(x,y).
\end{equation*}

Following previous constructions for the sphere, it is not difficult to deduce 
the existence of area regular partitions with diameter comparable to $N^{-1/d}$ 
for any compact algebraic variety, see for example \cite{RSZ94} and the 
references therein. The existence of such a partition in our case can be deduced 
also from a recent result by Gigante and Leopardi for Ahlfors regular metric 
measure spaces, see \cite{gigante2015diameter}*{Theorem 2}.

\begin{prop}				\label{thm_2}
For any $N\ge N_0=N_0(\M)\in \mathbb{N}$ there exists an area regular partition 
$\mathcal{R} = \{ R_{1},\ldots,R_{N} \}$ of $\M$, with $\mu_{\M}(R_i)=1/N$ for 
all $1 \leq i \leq N$, and such that	
\begin{equation}					\label{eq_10}
B (c_1 N^{-1/d}) \subset R_i \subset B (c_2 N^{-1/d}),
\end{equation}
where $B(r)$ is a geodesic ball in $\M$ of radius $r>0$ and the constants 
$c_1,c_2$  depend only on $\M$.
\end{prop}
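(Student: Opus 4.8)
The plan is to exhibit $(\M,d,\mu_{\M})$ as an Ahlfors $d$-regular metric measure space and then invoke \cite{gigante2015diameter}*{Theorem 2}, which produces equal-measure partitions of diameter comparable to $N^{-1/d}$ on any such space. Recall that Ahlfors $d$-regularity means that there are constants $0<a\le b$ and $r_0>0$ with
\[
a\,r^{d}\le \mu_{\M}(B(x,r))\le b\,r^{d}
\]
for every $x\in\M$ and every $0<r\le \operatorname{diam}(\M)$. Once this is in place, the theorem of Gigante and Leopardi yields, for all $N\ge N_0(\M)$, a family of closed sets $\mathcal{R}=\{R_1,\ldots,R_N\}$ with $\mu_{\M}(R_i)=1/N$ and $\|\mathcal{R}\|\le c_2N^{-1/d}$, and the two-sided ball condition \eqref{eq_10} will be read off from this.

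To verify Ahlfors regularity I would argue locally and globalize by compactness. Since $\M$ is a smooth compact submanifold of $\R^n$, its injectivity radius is bounded below by some $\iota>0$, and on each geodesic ball of radius $\le\iota$ the exponential map is a diffeomorphism whose volume element differs from the Euclidean one by a factor $1+O(r^2)$. Hence there exist $r_0\in(0,\iota]$ and constants $0<a'\le b'$, independent of the center by compactness, with $a'\,r^{d}\le\mu_{\M}(B(x,r))\le b'\,r^{d}$ for $0<r\le r_0$. For $r_0<r\le\operatorname{diam}(\M)$ the upper bound is trivial, since $\mu_{\M}(B(x,r))\le\mu_{\M}(\M)=1\le r_0^{-d}r^{d}$, while the lower bound follows from $\mu_{\M}(B(x,r))\ge\mu_{\M}(B(x,r_0))\ge a'r_0^{d}$ together with $r^{d}\le\operatorname{diam}(\M)^{d}$; adjusting $a,b$ gives the stated estimate on the whole range.

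With the partition in hand, the outer inclusion is immediate: fixing any $x_i\in R_i$, every $y\in R_i$ satisfies $d(x_i,y)\le\|\mathcal{R}\|\le c_2N^{-1/d}$, so $R_i\subset B(c_2N^{-1/d})$. The inner inclusion $B(c_1N^{-1/d})\subset R_i$ is the delicate point, and is where I expect the main difficulty to lie: a set of measure $1/N$ and diameter $\lesssim N^{-1/d}$ need not contain a ball of radius $\gtrsim N^{-1/d}$ without some control on its shape, and the abstract properties alone do not rule out thin cells. I would therefore extract it from the \emph{construction} underlying \cite{gigante2015diameter} rather than from the measure and diameter bounds in isolation: the cells there arise from a recursive equal-measure subdivision performed in local coordinates, so each $R_i$ is comparable to a ball from both sides. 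Concretely, one shows the inradius of $R_i$ is $\gtrsim N^{-1/d}$, which by the lower Ahlfors bound forces $R_i$ to contain a geodesic ball $B(c_1N^{-1/d})$. Choosing $c_1,c_2$ depending only on $a$, $b$, $d$ and $\operatorname{diam}(\M)$, hence only on $\M$, completes the argument.
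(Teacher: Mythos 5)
Your proposal follows essentially the same route as the paper, which establishes this proposition only by remarking that it can be deduced from the Gigante--Leopardi theorem on diameter-bounded equal-measure partitions of Ahlfors regular metric measure spaces (or from the classical constructions on the sphere); your verification of Ahlfors $d$-regularity by a local-coordinates-plus-compactness argument and your reading of the outer inclusion $R_i\subset B(c_2N^{-1/d})$ off the diameter bound simply make explicit what the authors leave implicit. You are also right to flag that the inner inclusion $B(c_1N^{-1/d})\subset R_i$ is the one point not contained in the bare statement of the cited theorem and must be extracted from its construction; the paper glosses over this in exactly the same way, so your account is, if anything, the more candid one.
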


%===============================================================================

\begin{center}
\begin{tikzpicture}
\filldraw[fill=blue!40!white, draw=black, line width=1.6pt] plot [smooth cycle, 
tension=1] coordinates {(3,3) (6,2) (3,-1.85) (1.5,-1) (1,0.5)};
% CÍRCULO PEQUEÑO
\draw [line width=1.2pt, dashed] (3.33,1) circle (1.95cm);
% CÍRCULO GRANDE
\draw [line width=1.2pt,color=black, dashed] (3.34,0.75) circle (3.15cm);
% RADIO PEQUEÑO
\draw [line width=2.pt,color=black] (3.33,1)-- (5,0);
% RADIO GRANDE
\draw [line width=2.pt] (3.34,0.75)-- (1.55,3.35);
% CARTEL R
\draw (5,2.8) node[anchor=north west] {$R_{i}$};
% CARTEL RADIO PEQUEÑO
\draw (3.5,1.5)node[anchor=north west] {$c_1 N^{-1/d}$};
% CARTEL RADIO GRANDE
\draw (2.2,2.8) node[anchor=north west] {$c_2 N^{-1/d}$};
\begin{scriptsize}
	% CENTRO
	\draw [fill=black] (3.33,1) circle (2.5pt);
	% CENTRO
	\draw [fill=black] (3.34,0.75) circle (2.5pt);
\end{scriptsize}
\end{tikzpicture}
\end{center}

%===============================================================================

Our result about Marcinkiewicz-Zygmund inequalities is the following:

\begin{thm}							\label{thm_3}
There exists a constant $A=A(\M)>0$ such that if $N\ge A t^d$ and $\mathcal{R} 
= \{R_{1},\ldots,R_{N}\}$ is an area regular partition of $\M$ as in 
\eqref{eq_10}.
% with
% \[
% B (c_1 N^{-1/d}) \subset R_i \subset B (c_2 N^{-1/d}),
% \]
% for some constants $c_1,c_2$ depending only on $\M$. 
Then for all $P \in \P_t$
\begin{equation}								
\label{eq_15}
\frac{1}{2} \int_{\M} |P(x)| d\mu_{\M}(x)
\leq
\frac{1}{N} \sum_{i=1}^{N} |P(x_{i})| 
\leq
\frac{3}{2} \int_{\M} |P(x)| d\mu_{\M}(x),
\end{equation}
for any choice of $x_{i} \in R_{i}$, with $1 \leq i \leq N$.
\end{thm}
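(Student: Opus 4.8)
The plan is to prove the Marcinkiewicz–Zygmund inequality \eqref{eq_15} by controlling the deviation between the continuous average $\int_\M |P|\,d\mu_\M$ and the discrete average $\frac1N\sum_i |P(x_i)|$. The starting observation is that, since $\mu_\M(R_i)=1/N$ and the $R_i$ partition $\M$, we can write the continuous integral as $\int_\M |P(x)|\,d\mu_\M(x)=\sum_{i=1}^N \int_{R_i}|P(x)|\,d\mu_\M(x)$, and hence the error is
\begin{equation*}
\Bigl|\frac1N\sum_{i=1}^N|P(x_i)|-\int_\M|P(x)|\,d\mu_\M(x)\Bigr|
\le \sum_{i=1}^N\int_{R_i}\bigl||P(x_i)|-|P(x)|\bigr|\,d\mu_\M(x)
\le \sum_{i=1}^N\int_{R_i}|P(x_i)-P(x)|\,d\mu_\M(x).
\end{equation*}
So it suffices to bound $|P(x_i)-P(x)|$ for $x\in R_i$. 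First I would invoke Proposition~\ref{thm_2} to fix an area regular partition with $R_i\subset B(c_2N^{-1/d})$, so that the geodesic distance $d(x_i,x)\lesssim N^{-1/d}$ uniformly, and then estimate the oscillation $|P(x_i)-P(x)|$ by the maximal gradient of $P$ along the geodesic joining $x_i$ to $x$, giving $|P(x_i)-P(x)|\le d(x_i,x)\cdot\sup_{z\in R_i}|\nabla_\M P(z)|$.

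The crucial analytic input, and the part I expect to be the main obstacle, is a Bernstein–Markov–type inequality bounding the tangential gradient of a degree-$t$ polynomial in terms of its $L^1$ size. Concretely, I would use the Bernstein inequality for polynomials on algebraic varieties from \cite{berman2015sampling} to obtain, for every $z\in\M$, a pointwise bound of the form $|\nabla_\M P(z)|\lesssim t\,M(z)$, where $M(z)$ is a suitable local maximal average of $|P|$ over a ball of radius $\sim 1/t$ around $z$; the factor $t$ is the expected Bernstein gain of one derivative. The delicate issue is passing from this local pointwise control to a global $L^1$ bound: one must show $\sum_i\int_{R_i} d(x_i,x)\,|\nabla_\M P(z)|\,d\mu_\M(x)\lesssim N^{-1/d}\,t\int_\M|P|\,d\mu_\M$, which requires that the local maximal averages $M(z)$ be comparable, after integration, to the global $L^1$ norm. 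This comparison is where the separation and closeness of the sampling points enter: because the balls of radius $\sim 1/t$ have bounded overlap and each $R_i$ has diameter $\lesssim N^{-1/d}\lesssim 1/t$ (using $N\ge At^d$), the maximal function is controlled and the sum telescopes into a constant multiple of $\int_\M|P|$.

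Combining these estimates yields
\begin{equation*}
\Bigl|\frac1N\sum_{i=1}^N|P(x_i)|-\int_\M|P(x)|\,d\mu_\M(x)\Bigr|
\le C\,N^{-1/d}\,t\int_\M|P(x)|\,d\mu_\M(x)
= C\,\frac{t}{N^{1/d}}\int_\M|P(x)|\,d\mu_\M(x).
\end{equation*}
The hypothesis $N\ge At^d$ is exactly what makes $t/N^{1/d}\le t/(A^{1/d}t)=A^{-1/d}$ small, so by choosing the constant $A=A(\M)$ large enough the right-hand side is at most $\tfrac12\int_\M|P|\,d\mu_\M$. Rearranging the resulting two-sided inequality gives precisely the factors $\tfrac12$ and $\tfrac32$ in \eqref{eq_15}. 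The whole argument is uniform in the choice of $x_i\in R_i$, since every bound above depends only on the diameter of $R_i$ and not on the particular representative, which is what the statement requires.
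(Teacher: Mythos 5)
Your proposal is correct and follows essentially the same route as the paper: the same decomposition of the error into per-cell oscillations, the same bound by $\|\mathcal{R}\|$ times the sum of maximal tangential gradients, the same Bernstein-type gain of a factor $t$ from \cite{berman2015sampling}, and the same bounded-overlap summation leading to $C\,t\,N^{-1/d}\lesssim C\,A^{-1/d}$. The only difference is one of explicitness: where you invoke a pointwise bound $|\nabla_t P(z)|\lesssim t\,M(z)$ as a black box, the paper realizes it concretely by applying Cauchy's inequality on the complexification $X$ over balls $B_X(x_i',1/t)$ and then using the tube estimate of Lemma~\ref{lem_3} to return from the neighbourhood $U(1/t)\subset X$ to an integral over $\M$.
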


%===============================================================================
%===============================================================================
%===============================================================================
%===============================================================================

We will need also Marcinkiewicz-Zygmund inequalities for tangential gradients 
of polynomials. Observe that, unlike for the sphere, for a general variety the 
tangential gradient is not necessarily a polynomial.

\begin{defn}
Given a differentiable function $f$ in $\M\subset \R^n$. The tangential 
gradient of $f$ at a point $x \in \M$, denoted as $\nabla_t f(x)$ is the 
orthogonal projection of the gradient $\nabla f(x)$ onto the tangent space of 
$\M$ at $x$. 
\end{defn}

%===============================================================================

 \begin{cor}								
\label{cor3.6}
There exists a constant $A=A(\M)>0$ such 
that if $N\ge A t^d$ and $\mathcal{R} = \{R_{1},\ldots,R_{N}\}$ is an area 
regular partition of $\M$ as in \eqref{eq_10}.

Then for all $P \in \mathcal{P}_t$
\begin{equation}					\label{eq_75}
\frac{1}{K_\M} \int_{\M} |\nabla_t P(x)| d\mu_{\M}(x)
\leq
\frac{1}{N} \sum_{i=1}^{N} |\nabla_t P(x_{i})| 
\leq
K_\M \int_{\M} |\nabla_t P(x)| d\mu_{\M}(x),
\end{equation}
where $K_\M=3\sqrt{d}\binom{r}{n-d} C_\M$ for $C_\M>0$ depending only on $\M$ 
and for any choice of $x_{i} \in R_{i}$, with $1 \leq i \leq N$.
\end{cor}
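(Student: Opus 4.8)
The plan is to reduce the vector-valued inequality \eqref{eq_75} to the scalar Marcinkiewicz--Zygmund inequality of Theorem~\ref{thm_3}, applied to genuine polynomials. The obstacle, already signalled above, is that on a general $\M$ the tangential gradient $\nabla_t P$ is not a polynomial, so Theorem~\ref{thm_3} cannot be invoked for $|\nabla_t P|$ directly. The remedy is to read off $\nabla_t P$ from its pairings with polynomial vector fields tangent to $\M$. Such fields exist in abundance: writing $A_I(x)$ for the $(n-d)\times n$ matrix whose rows are the gradients $\nabla p_i(x)$, $i\in I$, for an $(n-d)$-subset $I\subset\{1,\dots,r\}$, the maximal minors of $A_I$ furnish, via generalized cross products of the rows with standard basis vectors, polynomial vector fields $V_1^I,\dots,V_d^I$ lying in $\ker A_I$, i.e. tangent to $\M$, of degree bounded by a constant depending only on $\M$. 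On the open set $U_I\subset\M$ where the rows of $A_I$ are linearly independent these fields span the tangent space, and since $\nabla p_1,\dots,\nabla p_r$ has rank $n-d$ at every point, the sets $U_I$, as $I$ ranges over the $\binom{r}{n-d}$ subsets, cover $\M$.

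First I would fix a measurable partition $\M=\bigsqcup_I\Omega_I$ with $\Omega_I\subset U_I$, assigning each $x$ to an $I$ for which the frame $V_1^I,\dots,V_d^I$ is best conditioned, say maximizing the least eigenvalue of the Gram matrix $G_I=(\langle V_j^I,V_k^I\rangle)_{jk}$. Compactness of $\M$ then yields constants $0<c_\M\le \lambda_{\min}(G_I)\le\lambda_{\max}(G_I)\le C_\M'$ on each $\Omega_I$, hence a pointwise bound $\sum_{k}|\langle\nabla_t P,V_k^I\rangle|\le\sqrt d\,\kappa_\M\,|\nabla_t P|$ with $\kappa_\M=\lambda_{\max}/\sqrt{\lambda_{\min}}$, and conversely $|\nabla_t P|\le c_\M^{-1/2}\bigl(\sum_k\langle\nabla P,V_k^I\rangle^2\bigr)^{1/2}$ on $\Omega_I$, where I used that $V_k^I$ is tangent so $\langle\nabla P,V_k^I\rangle=\langle\nabla_t P,V_k^I\rangle$. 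The pairings $\langle\nabla P,V_k^I\rangle=\sum_\ell (V_k^I)_\ell\,\partial_\ell P$ are honest polynomials of degree at most $t-1+O_\M(1)$.

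Next I would run the two estimates in parallel. For the upper bound I split the sum over the cells $\Omega_I$, bound $|\nabla_t P(x_i)|$ by $c_\M^{-1/2}\sum_k|\langle\nabla P(x_i),V_k^I(x_i)\rangle|$ on $\Omega_I$, enlarge each inner sum over $i$ to the full index set (all terms being nonnegative), and apply the upper half of \eqref{eq_15} to each polynomial $\langle\nabla P,V_k^I\rangle$. Collapsing $\sum_k$ back with $\sum_k|\langle\nabla_t P,V_k^I\rangle|\le\sqrt d\,\kappa_\M|\nabla_t P|$ turns $\sum_k\int_\M|\langle\nabla P,V_k^I\rangle|\,d\mu_\M$ into $\sqrt d\,\kappa_\M\int_\M|\nabla_t P|\,d\mu_\M$, and summing over the $\binom{r}{n-d}$ cells produces the factor $\binom{r}{n-d}$ alongside the $3/2$ from Theorem~\ref{thm_3}. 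The lower bound is the mirror image: integrate $|\nabla_t P|\le c_\M^{-1/2}\sum_k|\langle\nabla P,V_k^I\rangle|$ over $\Omega_I$, apply the lower half of \eqref{eq_15} in the form $\int\le\frac2N\sum$, and collapse $\sum_k$ with the same pointwise inequality evaluated at the sample points. Taking $A$ large enough to absorb the $O_\M(1)$ increase of degree, so that $N\ge At^d$ still guarantees the hypothesis of Theorem~\ref{thm_3} at degree $t-1+O_\M(1)$, makes both steps legitimate; bounding the numerical factors $3/2$ and $2$ by the common constant $3$ then gives \eqref{eq_75} with $K_\M=3\sqrt d\binom{r}{n-d}C_\M$ and $C_\M=\kappa_\M c_\M^{-1/2}$.

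The main obstacle is exactly the non-polynomiality of $\nabla_t P$, and the device that resolves it, pairing against polynomial tangent frames, forces one to control the conditioning of those frames uniformly over $\M$; this is the role of the partition $\{\Omega_I\}$ and of the compactness argument furnishing $c_\M$ and $\kappa_\M$, which is the only genuinely geometric input beyond Theorem~\ref{thm_3}. The remaining tracking of constants is bookkeeping: $\binom{r}{n-d}$ counts the charts, $\sqrt d$ is the Cauchy--Schwarz passage between the Euclidean norm of a $d$-dimensional tangent vector and the $\ell^1$-norm of its frame coordinates, and the factor $3$ dominates the one-sided constants $3/2$ and $2$ inherited from \eqref{eq_15}.
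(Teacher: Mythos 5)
Your argument is correct in substance and reaches the stated inequality with the right shape of constant, but it travels a recognizably different road from the paper's. Both proofs face the same obstacle (non-polynomiality of $\nabla_t P$) and both resolve it by manufacturing polynomial vector fields out of the defining gradients $\nabla p_i$, paying a combinatorial factor $\binom{r}{n-d}$ for the choice of an $(n-d)$-subset $I$ and a compactness constant for uniform conditioning. The difference is dual: the paper builds polynomial \emph{normal} frames $u_i^I$ by Gram--Schmidt determinants, writes $\nabla_t P$ via the projection formula \eqref{eq_74}, clears the denominators $\|u_i^I\|^{-2}$ to obtain the polynomial vectors \eqref{eq2}, applies the vector inequality of Corollary~\ref{cor3.5} to each, and compares the supremum over $I$ with $|\nabla_t P|$ globally; you instead build polynomial \emph{tangent} frames $V_k^I$ by generalized cross products, recover $|\nabla_t P|$ from the scalar polynomials $\langle\nabla P,V_k^I\rangle$ on a measurable partition $\{\Omega_I\}$ of $\M$ into well-conditioned charts, and apply the scalar Theorem~\ref{thm_3} to each pairing. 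Your version buys a slightly more transparent pointwise comparison (two-sided control of $|\nabla_t P|$ by frame coordinates, with the needed one-sided bound $\sum_k|\langle\nabla_t P,V_k^I\rangle|\le C_\M|\nabla_t P|$ valid on all of $\M$ by boundedness of the polynomial fields, which is exactly what the enlargement of the sum over $i\in\Omega_I$ to all $i$ requires); the paper's version avoids any partition of $\M$ by working with the supremum over $I$ and keeps everything inside a single application of the vector inequality per chart. Both handle the $O_\M(1)$ degree increase the same way, by enlarging $A$.

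One detail in your construction deserves tightening: it is not automatic that exactly $d$ cross products of the rows of $A_I$ with $(d-1)$-tuples of standard basis vectors span $\ker A_I$ at every point of $U_I$; for a fixed choice of tuples the resulting fields can degenerate somewhere on $U_I$ even though the rows stay independent. The fix is harmless: either use the full redundant family of all $\binom{n}{d-1}$ such cross products, which does span $\ker A_I(x)$ whenever the rows are independent, or refine the partition further by selecting pointwise the best-conditioned $d$-subset. Either way the extra combinatorial factor is absorbed into $C_\M$, so the conclusion and the form $K_\M=3\sqrt d\binom{r}{n-d}C_\M$ survive.
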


%===============================================================================
%===============================================================================
%===============================================================================

As in \cite{BRV13}, the last ingredient of the proof of Theorem \ref{thm_6} is the following 
result from Brouwer degree theory:

\begin{thm}[\cite{cho2006topological}*{Theorem 1.2.9}] 		\label{thm_1}
Let $f:\R^{n} \longrightarrow \R^{n}$ be a continuous mapping and $\Omega$ an 
open bounded subset, with boundary $\partial \Omega$, such that $0 \in \Omega 
\subset \R^{n}$.
If $\langle x,f(x) \rangle >0 $ for all $x \in \partial\Omega$, 
then there exists $x \in \Omega$ satisfying $f(x) = 0$.
\end{thm}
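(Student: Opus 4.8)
The plan is to recognize this as a standard consequence of the homotopy invariance of the Brouwer degree, so that the proof reduces to deforming $f$ to the identity through maps that never vanish on $\partial\Omega$. First I would check that the degree $\deg(f,\Omega,0)$ is well defined. Since $0\in\Omega$ and $\Omega$ is open, no point of $\partial\Omega$ equals the origin; and if $f(x)=0$ for some $x\in\partial\Omega$, then $\langle x,f(x)\rangle=0$, contradicting the hypothesis $\langle x,f(x)\rangle>0$. Hence $0\notin f(\partial\Omega)$ and the Brouwer degree of $f$ on $\Omega$ relative to $0$ is defined.

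Next I would introduce the straight-line homotopy
\[
H(x,s)=(1-s)\,f(x)+s\,x,\qquad (x,s)\in\overline{\Omega}\times[0,1],
\]
which joins $H(\cdot,0)=f$ to $H(\cdot,1)=\mathrm{id}$, and verify that it is admissible, that is, $H(x,s)\neq 0$ for every $x\in\partial\Omega$ and every $s\in[0,1]$. Pairing with $x$ gives
\[
\langle x,H(x,s)\rangle=(1-s)\,\langle x,f(x)\rangle+s\,|x|^2 .
\]
For $x\in\partial\Omega$ we have $x\neq 0$ (again because $0\in\Omega$ is an interior point), so $|x|^2>0$; combined with $1-s\ge 0$, $s\ge 0$ and $\langle x,f(x)\rangle>0$, both summands are nonnegative and at least one is strictly positive for each $s\in[0,1]$. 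Therefore $\langle x,H(x,s)\rangle>0$, which forces $H(x,s)\neq 0$ on $\partial\Omega$ throughout the deformation.

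Finally, homotopy invariance of the degree yields $\deg(f,\Omega,0)=\deg(\mathrm{id},\Omega,0)=1$, the last equality because $0\in\Omega$. As the degree is nonzero, the solution property of the Brouwer degree guarantees a point $x\in\Omega$ with $f(x)=0$, which is the assertion. I expect the only delicate step to be the admissibility check for $H$: once the inner-product computation above is in place, together with the observation that $0\notin\partial\Omega$ because $\Omega$ is open, the degree-theoretic machinery does the rest. (Working with a general bounded open $\Omega$ rather than a ball is precisely what makes the degree the natural tool here, as a direct Brouwer fixed-point or no-retraction argument would require additional hypotheses on the shape of $\Omega$.)
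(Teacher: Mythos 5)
Your argument is correct. Note that the paper does not prove this statement at all: it is quoted verbatim as Theorem 1.2.9 of the cited book of Cho and Chen and used as a black box, so there is nothing internal to compare against. Your proof is the standard one (and essentially the one in that reference): the condition $\langle x,f(x)\rangle>0$ on $\partial\Omega$ rules out zeros of $f$ on the boundary, the linear homotopy $H(x,s)=(1-s)f(x)+sx$ is admissible because $\langle x,H(x,s)\rangle$ is a convex combination of the two positive quantities $\langle x,f(x)\rangle$ and $|x|^2$ (the latter positive since $0\in\Omega$ open forces $0\notin\partial\Omega$), and homotopy invariance gives $\deg(f,\Omega,0)=\deg(\mathrm{id},\Omega,0)=1\neq 0$, whence the existence of a zero in $\Omega$. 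All the steps, including the admissibility check, are complete and correct.
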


Defining the convenient mapping from $\mathcal{P}_t$ into itself, this result will give us \eqref{eq_5}.

%===============================================================================
%===============================================================================
%===============================================================================
%===============================================================================

\section{Proofs}

First we prove the Marcinkiewicz-Zygmund inequalities in the algebraic variety 
$\M$ (Theorem~\ref{thm_3}). Similar results have been obtained also in general 
compact Riemannian manifolds 
for spaces of, so-called, diffusion polynomials (i.e. eigenfunctions of 
elliptic differential 
operators, in particular, for the Laplace-Beltrami operator), 
\cites{FM10,FM11}.  
In the proof we use the 
following result from Berman and Ortega-Cerd\`a 
\cite{berman2015sampling}*{Theorem~10}. This result is 
analogous to Plancherel-Polya inequality for entire functions of 
exponential type, \cite{You01}.

%===============================================================================

\begin{lem}		\label{lem_3}
There exists a constant $C=C_\M > 0$ such that for all polynomials $P \in \P_t$ 
the 
following inequality holds
\begin{equation*} 							
\label{eq_64}
\int_{U \left( \frac{1}{t} \right)} |P(x)| 
d\mu_{X}(x)
\leq
\frac{C}{t^d} \int_{\M} |P(x)| d\mu_{\M}(x),
\end{equation*}
where $U\left(\frac{1}{t}\right) = \left\lbrace x \in X: d(x, \M)  \leq 
\frac{1}{t}\right\rbrace$.

\end{lem}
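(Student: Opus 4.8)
The plan is to mimic the proof of the Plancherel--Pólya inequality, reading $\P_t$ as the polynomial analogue of entire functions of exponential type $t$ and the tube $U(1/t)$ as the analogue of a horizontal strip of width $1/t$ about the real axis. Since $X$ has complex dimension $d$ and $\M$ is a maximally totally real submanifold of real dimension $d$, after a finite cover by tubular-neighborhood charts the tube $U(1/t)$ is a bundle over $\M$ whose fibers are $d$-real-dimensional pieces of the ``imaginary'' normal directions $iT_x\M$, of diameter $\lesssim 1/t$. First I would use this fibration and Fubini to write
\[
\int_{U(1/t)}|P(z)|\,d\mu_X(z) \lesssim \int_{\M}\Bigl(\int_{|v|\le c/t}|P(x+v)|\,dv\Bigr)\,d\mu_\M(x),
\]
with a bounded Jacobian, thereby reducing the estimate to a pointwise control of $|P|$ on each fiber by the values of $|P|$ on $\M$ near $x$.

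The heart is a Bernstein--Walsh type growth estimate. Because $P$ is holomorphic on $X$ and $\log|P|$ is plurisubharmonic, the Bernstein-type inequality of \cite{berman2015sampling} controls the growth of $\log|P|$ off $\M$ at a rate proportional to $t$, giving for $|v|\le c/t$
\[
|P(x+v)| \le e^{Ct|v|}\sup_{y\in\M,\, d(x,y)\le C/t}|P(y)| \le C'\sup_{y\in\M,\, d(x,y)\le C/t}|P(y)|,
\]
where the last step uses $t|v|\lesssim 1$. Concretely, one restricts $P$ to holomorphic disks $s\mapsto P(\tilde\gamma_w(s))$ obtained by complexifying the real geodesics $\gamma_w$ of $\M$ through $x$ in directions $w\in T_x\M$, so that $iw$ sweeps out the fiber; the totally real character of $\M$ guarantees these disks fill a full complex neighbourhood, and the degree-$t$ growth bound makes the passage from the real diameter to the imaginary radius $|s|\le c/t$ cost only a bounded factor. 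I would then turn the supremum into an $L^1$ average over $\M$ by a local sub-mean-value (Remez-type) inequality at scale $1/t$, again exploiting that $|P|$ is the modulus of a degree-$t$ holomorphic function,
\[
\sup_{y\in\M,\, d(x,y)\le C/t}|P(y)| \le \frac{C}{\mu_\M(B(x,C'/t))}\int_{B(x,C'/t)}|P(y)|\,d\mu_\M(y).
\]

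Inserting the last two bounds into the fibered integral, and using that the fiber has volume $\lesssim (1/t)^d$ while $\mu_\M(B(x,C'/t))\simeq (1/t)^d$ by Ahlfors regularity of $\mu_\M$, I would obtain
\[
\int_{U(1/t)}|P|\,d\mu_X \lesssim \frac{1}{t^d}\int_\M \frac{1}{\mu_\M(B(x,C'/t))}\int_{B(x,C'/t)}|P(y)|\,d\mu_\M(y)\,d\mu_\M(x),
\]
and a Fubini interchange together with the finite-overlap and doubling properties of the balls $B(x,C'/t)$ collapses the iterated average back to $\int_\M|P|\,d\mu_\M$, producing the claimed factor $C/t^d$. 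The main obstacle is the Bernstein--Walsh step: establishing, uniformly in $x\in\M$ with constants depending only on $\M$, that a degree-$t$ polynomial grows by at most a bounded factor over distance $1/t$ into the complexification, while simultaneously converting this pointwise bound into an $L^1$ bound without losing powers of $t$. This is precisely where the Bernstein inequality of \cite{berman2015sampling} and the maximally totally real geometry of $\M\subset X$ (with its varying second fundamental form) must be controlled uniformly; the remaining covering and Fubini arguments are routine.
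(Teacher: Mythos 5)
The first thing to note is that the paper contains no proof of this lemma to compare yours against: the statement is imported verbatim as Theorem~10 of Berman and Ortega-Cerd\`a \cite{berman2015sampling}, and the authors simply cite it. Your sketch is a sensible outline of how a Plancherel--P\'olya statement of this kind is established --- fibre the tube $U(1/t)$ over $\M$ with $d$-dimensional fibres in the directions $iT_x\M$, control $|P|$ on each fibre by nearby values on $\M$, then average back --- and the final bookkeeping (fibre volume $\asymp t^{-d}$ against $\mu_\M(B(x,C'/t))\asymp t^{-d}$, Fubini, bounded overlap of the balls) is indeed routine and correct, modulo the standard technical point that ``$x+v$'' must be interpreted through a tubular-neighbourhood diffeomorphism since $X$ is curved.

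The gap is that the two estimates carrying all of the analytic content are asserted rather than proved, and each has essentially the same depth as the lemma itself. First, the localized Bernstein--Walsh bound $|P(x+v)|\le e^{Ct|v|}\sup_{y\in\M,\,d(x,y)\le C/t}|P(y)|$ is strictly stronger than the classical Bernstein--Walsh inequality, which controls $|P(z)|$ by $e^{t\,V_\M(z)}\|P\|_{L^\infty(\M)}$ with the supremum over \emph{all} of $\M$; localizing the supremum to a ball of radius $C/t$ requires a genuine additional argument (a uniform doubling property of $|P|$ at scale $1/t$), and even the global version rests on the nontrivial fact that the relevant extremal function grows at most linearly in $d(\cdot,\M)$ near $\M$, uniformly over the manifold. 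The complexified-geodesic disks you describe do not by themselves deliver this, since a one-variable growth estimate on each disk again presupposes control of $P$ on a full real arc, not merely near $x$. Second, the sub-mean-value step $\sup_{B_\M(x,C/t)}|P|\le C\,\mu_\M(B(x,C'/t))^{-1}\int_{B(x,C'/t)}|P|\,d\mu_\M$ is a local Nikolskii/Remez inequality at the critical scale $1/t$ with constants uniform in $t$ and $x$, which on a general algebraic manifold is again a theorem of the same family, not a routine fact. Appealing to ``the Bernstein-type inequality of \cite{berman2015sampling}'' to justify the first step is essentially circular here, because the statement to be proved \emph{is} a theorem of that reference. You are right to flag the Bernstein--Walsh step as the main obstacle, but as written the proposal reduces the lemma to two unproved assertions that together amount to the lemma.
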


%===============================================================================

\begin{center}
\begin{tikzpicture}
\draw[line 
width=1.6pt,color=red,smooth,samples=100,
domain=-2.4097289642490645:7.408366288767699] 
plot(\x,{cos(((\x)^(2.0)/10.0)*180/pi)});
\draw[line 
width=1.2pt,smooth,samples=100,domain=-2.4097289642490645:7.408366288767699] 
plot(\x,{cos(((\x)^(2.0)/10.0)*180/pi)-1.0});
\draw[line 
width=1.2pt,color=black,smooth,samples=100,
domain=-2.4097289642490645:7.408366288767699] 
plot(\x,{cos(((\x)^(2.0)/10.0)*180/pi)+1.0});
\draw [line width=2.pt] (0.4245513635086049,0.999837565151902)-- 
(0.4245513635086049,1.999837565151902);
\draw (0.5356996116559645,1.7893456645962826) node[anchor=north west] {$U 
\left( 
\frac{1}{t} \right)$};
\draw (-1.3352958988245887,0.937209095466522) node[anchor=north west] 
{$\mathcal{\M}$};
\draw (1.8231668193628796,-1.1560829113087596) node[anchor=north west] 
{\parbox{1.500167116663118 cm}{$X$}};
\end{tikzpicture}
\end{center}

%===============================================================================
%===============================================================================
%===============================================================================
%===============================================================================

\begin{proof}[Proof of Theorem~\ref{thm_3}]

It is enough to show that 
\[
\left| \frac{1}{N} \sum_{i=1}^{N} |P(x_{i})|-\int_{\M} |P(x)| d\mu_{\M}(x) 
\right|\le C a\int_{\M} 
|P(x)| d\mu_{\M}(x),
\]
where $C=C(\M)>0$ is a constant depending only $\M$ and $1\ge a A$. Just take 
$A\ge 2C$.

During all the proof, we will call $C$ all the constants depending on $\M$.

Let $N=t^d/a^d$, for some constant $a>0$ such that $1\ge a A$. By assumption
\[
B (a c_1 t^{-1}) \subset R_i \subset B (a c_2 t^{-1}),
\]
and
\[
\frac{2 a c_1}{t} \le \| \mathcal{R} \|\le \frac{2 a c_2}{t}.
\]

Then

\begin{align}				\label{ineq1}
\left| \frac{1}{N} \sum_{i=1}^{N} |P(x_{i})|-\int_{\M} |P(x)| d\mu_{\M}(x) 
\right| 
& \le 
\sum_{i=1}^{N} \left|  \int_{R_i}  |P(x_{i})|- |P(x)|  d\mu_{\M}(x) 
\right|\nonumber
\\
&
\le  
\frac{\| \mathcal{R} \|}{N} \sum_{i=1}^N |\nabla_t P(x_i')| ,
\end{align}
where $x_i'$ is such that $|\nabla_{t}P(x_i')| \geq |\nabla_{t}P(x)|$
for all $x \in R_i$. Observe that we can take $x_i'$ in the ball $B (a c_2 
t^{-1})$ containing $R_i$.

Consider now each $x_{i}'$ as a point of $X$, the complex variety, and apply 
Cauchy's inequality
\begin{equation*}
|\nabla_{t}P(x_{i}')|
\leq
\frac{C}{\left( \frac{1}{t} \right)^{2d+1}} 
\int_{B_X \left( x_{i}', \frac{1}{t}\right)} |P(z)| 
d\mu_{X}(z),  
\end{equation*}
where $C$ is a constant depending on $\M$ and $B_X (x_{i}', t^{-1})$ is a ball 
in $X$. 

We assume that $a<1/2 c_2$ and then $B_X (x_{i}', 2 a c_2 t^{-1})\subset B_X 
(x_{i}', t^{-1})$. With this 
assumption, as $R_i \subset B_X (x_{i}', 2 a c_2 t^{-1})$ and each $R_i$ 
contains a disjoint ball of 
radius $a c_1 t^{-1}$, we get that 
\[
\bigcap_{j=1}^m B_X(x_{i_j}',t^{-1})\neq \emptyset,
\]
implies that
\[
m\le \frac{C}{a^d}.
\]

We can bound the sum on the balls by the integral on a tubular domain around 
$\M$ defined as in Lemma~\ref{lem_3} (see 
\cite{berman2015sampling}) and taking into account the multiplicities:
\[
\sum_{i=1}^N 
\int_{B_X \left( x_{i}', \frac{1}{t}\right)} |P(z)| 
d\mu_{X}(z)\le  \frac{C}{ a^d }
\int_{U\left( \frac{1}{t}\right)} |P(z)| 
d\mu_{X}(z).
\]
Finally, we apply Lemma~\ref{lem_3} and we get that \eqref{ineq1} is bounded by
\[
\frac{\| \mathcal{R} \|}{N} \sum_{i=1}^N |\nabla_t P(x_i')|\le
C \frac{\|\mathcal{R}\|t^{d+1}}{a^d N} \int_{\M} 
|P(x)| d\mu_{\M}(x),
\]
so by using that $N=t^d/a^d$ and the upper bound for $\| \mathcal{R} \|$ we get 
the result.

\end{proof}

%===============================================================================
%===============================================================================
%===============================================================================

To prove the Marcinkiewicz-Zygmund inequalities for the tangential 
gradient (Corollary~\ref{cor3.6}) we use the following inequality for vectors 
of polynomials.

\begin{cor}							\label{cor3.5}
Let $k\in \mathbb N$ be a fixed 
constant. There exists a constant $A=A(\M, k)>0$ such that if $N\ge A t^d$ and 
$\mathcal{R} 
= \{R_{1},\ldots,R_{N}\}$ 
is an area regular partition as in \eqref{eq_10}. 
Then, for all vector of polynomials $Q(x)=(Q_1(x),\ldots , Q_m(x))$ with $m\le 2d$ and
$Q_j(x)\in \mathcal{P}_{t+k}(\M)$ we have that
\begin{equation}
\frac{1}{3\sqrt{d}} \int_{\M} |Q(x)| d\mu_{\M}(x)
\leq
\frac{1}{N} \sum_{i=1}^{N} |Q(x_{i})| 
\leq
3\sqrt{d} \int_{\M} |Q(x)| d\mu_{\M}(x),
\end{equation}
for any election of $x_i\in R_i$.
\end{cor}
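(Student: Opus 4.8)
The plan is to reduce the vector-valued statement to the scalar Marcinkiewicz--Zygmund inequality of Theorem~\ref{thm_3}, applied to each coordinate separately, and to absorb the discrepancy between the $\ell^1$ and $\ell^2$ norms on $\R^m$ into the constant. The point to keep in mind is that the Euclidean norm $|Q(x)|=\bigl(\sum_{j=1}^m Q_j(x)^2\bigr)^{1/2}$ is not itself a polynomial, so Theorem~\ref{thm_3} cannot be invoked for $|Q|$ directly; however, each coordinate $Q_j$ does lie in $\mathcal{P}_{t+k}(\M)$ and is covered by the scalar result, so the whole estimate can be assembled componentwise.

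First I would dispose of the degree shift. Theorem~\ref{thm_3} provides, for polynomials of degree $t+k$, a constant $A'=A'(\M)$ such that the two-sided inequality with constants $1/2$ and $3/2$ holds whenever $N\ge A'(t+k)^d$. Since $k$ is fixed, $(t+k)^d\le (1+k)^d t^d$ for $t\ge 1$, so taking $A=A(\M,k)=(1+k)^dA'$ guarantees that the hypothesis $N\ge A t^d$ of the corollary already forces $N\ge A'(t+k)^d$. Thus the scalar inequality is available for every $Q_j$:
\[
\frac12\int_\M |Q_j|\,d\mu_\M\le \frac1N\sum_{i=1}^N|Q_j(x_i)|\le \frac32\int_\M |Q_j|\,d\mu_\M,\qquad j=1,\dots,m.
\]
Next I would invoke the elementary norm equivalence $|Q(x)|\le \sum_{j=1}^m|Q_j(x)|\le \sqrt{m}\,|Q(x)|$ on $\R^m$. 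For the upper estimate, summing $|Q(x_i)|\le \sum_j|Q_j(x_i)|$ over $i$ and applying the scalar upper bounds gives
\[
\frac1N\sum_{i=1}^N|Q(x_i)|\le \frac32\int_\M \sum_{j=1}^m|Q_j|\,d\mu_\M\le \frac32\sqrt{m}\int_\M|Q|\,d\mu_\M,
\]
while for the lower estimate, $|Q(x_i)|\ge \tfrac1{\sqrt m}\sum_j|Q_j(x_i)|$ combined with the scalar lower bounds yields
\[
\frac1N\sum_{i=1}^N|Q(x_i)|\ge \frac1{2\sqrt m}\int_\M \sum_{j=1}^m|Q_j|\,d\mu_\M\ge \frac1{2\sqrt m}\int_\M|Q|\,d\mu_\M.
\]

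Finally I would check that the constraint $m\le 2d$ reconciles these with the claimed constant $3\sqrt d$. On the upper side $\tfrac32\sqrt m\le \tfrac32\sqrt{2d}=\tfrac{3}{\sqrt2}\sqrt d<3\sqrt d$, and on the lower side $\tfrac1{2\sqrt m}\ge \tfrac1{2\sqrt{2d}}=\tfrac1{2\sqrt2\,\sqrt d}>\tfrac1{3\sqrt d}$, since $2\sqrt2<3$. I do not expect any genuine obstacle here: once the componentwise reduction is in place, the corollary is essentially a bookkeeping consequence of Theorem~\ref{thm_3}, and the only substantive remark is that the hypothesis $m\le 2d$ is exactly what makes $\sqrt m$ small enough for the factor $3\sqrt d$ to dominate on both sides.
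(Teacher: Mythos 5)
Your proposal is correct and follows essentially the same route as the paper: apply the scalar Marcinkiewicz--Zygmund inequality of Theorem~\ref{thm_3} (with $t$ replaced by $t+k$) to each coordinate $Q_j$, and pass between $|Q|$ and $\sum_j|Q_j|$ via the norm equivalence $|Q(x)|\le\sum_j|Q_j(x)|\le\sqrt{m}\,|Q(x)|$, using $m\le 2d$ to land on the constant $3\sqrt d$. Your write-up is in fact slightly more careful than the paper's, which leaves the degree-shift bookkeeping and the final constant check implicit.
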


%========================================================================
%========================================================================
%========================================================================

\begin{proof}
 Let $A$ be the constant given by the previous theorem when we replace $t+k$ 
instead of 
$t$. Then we use that
\[
|Q(x)|\le \sum_{j=1}^m |Q_j(x)|\le \sqrt{m} |Q (x)|,
\]
and we apply the previous result for each $Q_j(x)$. 
\end{proof}

%========================================================================
%========================================================================
%===============================================================================

In \cite{BRV13} this result above was enough because the tangential gradient on 
the sphere of a spherical polynomial can be written as a vector of spherical 
polynomials (i.e. polynomials restricted to the sphere). In our case this is no 
longer the case and we have to be more careful.

%===============================================================================

\begin{proof}[Proof of Corollary~\ref{cor3.6}]

Let $\M$ be given as the common zero set of the real polynomials $p_1(x),\ldots 
, p_r(x)$.

Since $\M$ is smooth of dimension $d$, for all $x \in \M$ the normal space to 
$\M$ on $x$ is generated by
\[
\nabla p_{i_1}(x), \ldots, \nabla p_{i_{n-d}}(x),
\]
where the index $ i_1<\cdots <i_{n-d}$ (which may depend on $x$) is a subset of 
$\{1,\ldots , r\}$.

Assume that $i_j=j$ for $j=1,\ldots ,n-d$. By the Gram-Schmidt determinant-type 
formula
we obtain an orthogonal basis $u_{1}(x),\ldots,u_{n-d}(x)$ of the normal space 
at $x$ by the 
following determinants
\begin{equation*}
u_{i}(x)
=
\left|
\begin{array}{cccc} 
\left\langle  \nabla p_{1}(x) , \nabla p_{1}(x) \right\rangle & \left\langle  
\nabla p_{2}(x) , \nabla p_{1}(x) \right\rangle & \ldots & \left\langle  \nabla 
p_{i}(x) , \nabla p_{1}(x) \right\rangle 
\\
\left\langle  \nabla p_{1}(x) , \nabla p_{2}(x) \right\rangle & \left\langle  
\nabla p_{2}(x) , \nabla p_{2}(x) \right\rangle & \ldots & \left\langle  \nabla 
p_{i}(x) , \nabla p_{2}(x) \right\rangle 
\\
\vdots
& \vdots
&  \ddots
& \vdots
\\
\left\langle  \nabla p_{1}(x) , \nabla p_{i-1}(x) \right\rangle & \left\langle  
\nabla p_{2}(x) , \nabla p_{i-1}(x) \right\rangle & \ldots & \left\langle  
\nabla p_{i}(x) , \nabla p_{i-1}(x) \right\rangle 
\\
\nabla p_{1}(x)  & \nabla p_{2}(x) & \ldots  & \nabla p_{i}(x)
\\
\end{array}
\right|.
\end{equation*}
Observe that since every $\nabla p_{i}(x)$ is a vector of polynomials, the 
product $\left\langle  \nabla p_{i}(x) , \nabla p_{j}(x) \right\rangle $ is also 
a polynomial an therefore $u_i(x)$ is also a vector of polynomials of total 
degree bounded by a constant depending only on $\M$. The tangential gradient of 
$P$ at $x \in \M$ is then
\begin{equation}
\label{eq_74}
\nabla_{t}P(x) 
= 
\nabla P(x) -
\sum_{i=1}^{n-d} 
\frac{\left\langle \nabla  P(x) , u_{i}(x)\right\rangle u_{i}(x) }{\| u_{i}(x) 
\|^{2}}.
\end{equation}

If there are $n-d$ polynomials defining the normal space to $\M$ in all the 
variety, in particular, for the sphere or any other algebraic hypersurface, the 
result follows because one can apply Corollary~\ref{cor3.5} to the vector of 
polynomials
\[
\left( \prod_{i=1}^{n-d} \| u_i(x) \|^2\right) \nabla_t P(x),
\]
and use that as $\M$ is smooth
\[
0<C_\M^{-1}\le \prod_{i=1}^{n-d} \| u_i(x) \|^2\le C_\M,
\]
for some $C_\M>0$.

Now for any $I\subset \{ 1,\ldots ,r \}$ with $|I|=n-d$ we can define the 
vectors of polynomials $u_j^I(x)$ for $j=1,\ldots, n-d$ (where maybe some of the 
polynomials are zero) and by the previous corollary the 
Marcinkiewicz-Zygmund inequalities hold for
\begin{equation}					\label{eq2}
\left( \prod_{i=1}^{n-d} \| u_{i}^I(x) \|^{2} \right)
\nabla P(x) -
\sum_{i=1}^{n-d} \left\langle \nabla  P(x) , u_{i}^I(x)\right\rangle u_{i}^I(x)
\prod_{j \neq i} \| u_{j}^I(x) \|^{2}.
\end{equation}
Clearly, Marcinkiewicz-Zygmund inequalities hold also taking supremum for the subsets $I\subset \{ 1,\ldots ,r 
\}$ with $|I|=n-d$.

Indeed, now as 
\begin{equation*}
C_\M^{-1} |\nabla_{t} P(x)| 
\le 
\sup_{\substack{I\subset \{1,\ldots , r \}\\|I|=n-d}} \prod_{i = 1}^{n-d} \| 
u_{i}^I(x) \|^{2} |\nabla_{t} P(x)|\le 
C_\M |\nabla_{t} P(x)| ,
\end{equation*}
for some constant $C_\M>0$, the result follows because for $v_I(x)$ as in 
\eqref{eq2}
\[
\binom{r}{n-d}^{-1} \sum_{\substack{I\subset \{1,\ldots , r \}\\|I|=n-d}} 
|v_I(x)|\le 
\sup_{\substack{I\subset \{1,\ldots , r \}\\|I|=n-d}} |v_I(x)|\le 
\sum_{\substack{I\subset \{1,\ldots , r \}\\|I|=n-d}} |v_I(x)|,
\]
for all $x\in \M$.
\end{proof}

%==========================================================================

Now we define the mapping.

\begin{lem}						\label{prop_4}
There exists a constant $A=A(\M)>0$ such that if $N\ge A t^d$ 
then there exist a continuous mapping
\begin{align*}
\P_t^0 &\rightarrow \M^N\\
P & \mapsto (x_{1}(P),\ldots,x_{N}(P)),
\end{align*}
such that for 
all $P \in \P_t^0$ with $\int_\M |\nabla_t P(x)|d\mu(x)=1$

\begin{equation*}
\label{eq_9}
\Bigl\langle P, \sum_{i=1}^N K_{x_i(P)} \Bigr\rangle= \sum_{i=1}^{N} 
P(x_{i}(P))>0.
\end{equation*}
\end{lem}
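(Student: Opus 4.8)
The plan is to follow the topological scheme of \cite{BRV13}: the $N$ points are produced by a continuous ``gradient ascent'' inside the cells of an area regular partition, and the required positivity is then read off from the lower Marcinkiewicz--Zygmund inequality for gradients (Corollary~\ref{cor3.6}). Fix $N\ge At^{d}$ and a partition $\mathcal R=\{R_{1},\dots,R_{N}\}$ as in \eqref{eq_10}, and let $z_{i}$ be the centroid of $R_{i}$, so that $\int_{R_{i}}\exp_{z_{i}}^{-1}(y)\,d\mu_{\M}(y)=0$; for the round cells of \eqref{eq_10} this point lies in the interior, with $B(z_{i},\rho)\subset R_{i}$ for $\rho$ a fixed small multiple of $N^{-1/d}$. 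For $P\in\P_{t}^{0}$ set $v_{i}=\nabla_{t}P(z_{i})\in T_{z_{i}}\M$ and define the tangent vector $w_{i}(P)=\rho\,v_{i}/\max(|v_{i}|,\tau)$, with $\tau>0$ a small constant to be fixed, and finally $x_{i}(P)=\exp_{z_{i}}(w_{i}(P))$. This map is continuous on all of $\P_{t}^{0}$ --- including where some $v_{i}$ vanishes, since there $w_{i}=\rho v_{i}/\tau\to0$ --- and $|w_{i}|\le\rho$ forces $x_{i}(P)\in B(z_{i},\rho)\subset R_{i}$.

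To estimate $\sum_{i}P(x_{i}(P))$ I would Taylor expand $P\circ\exp_{z_{i}}$ and use $\int_{\M}P\,d\mu_{\M}=0$ together with $\mu_{\M}(R_{i})=1/N$ to write
\[
\sum_{i=1}^{N}P(x_{i}(P))=\sum_{i=1}^{N}\langle v_{i},w_{i}(P)\rangle+\mathrm{(I)}+\mathrm{(II)},
\]
where $\mathrm{(I)}=\sum_{i}P(z_{i})=N\sum_{i}\int_{R_{i}}\bigl(P(z_{i})-P(y)\bigr)\,d\mu_{\M}(y)$ and $\mathrm{(II)}$ gathers the second order remainders. The leading term is bounded below by the capping: since $\langle v_{i},w_{i}\rangle=\rho|v_{i}|^{2}/\max(|v_{i}|,\tau)\ge\rho(|v_{i}|-\tau)$, Corollary~\ref{cor3.6} gives
\[
\sum_{i=1}^{N}\langle v_{i},w_{i}(P)\rangle\ \ge\ \rho\Bigl(\sum_{i=1}^{N}|\nabla_{t}P(z_{i})|-N\tau\Bigr)\ \ge\ \rho N\Bigl(\tfrac{1}{K_{\M}}-\tau\Bigr),
\]
a positive quantity of size $\asymp\rho N$ as soon as $\tau<1/K_{\M}$.

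It then remains to show that $\mathrm{(I)}$ and $\mathrm{(II)}$ are a small fraction of $\rho N$. The choice of $z_{i}$ as the centroid annihilates the first order part of $\mathrm{(I)}$ (its integrand is $-\langle v_{i},\exp_{z_{i}}^{-1}(y)\rangle$ to first order, which integrates to zero), so after the expansion both $\mathrm{(I)}$ and $\mathrm{(II)}$ are bounded by $C\,(N^{-1/d})^{2}\sum_{i}\sup_{R_{i}}|D^{2}P|$. The whole point is to control this sum \emph{sharply}, namely
\[
\sum_{i=1}^{N}\sup_{R_{i}}|D^{2}P|\ \lesssim\ t\,N\int_{\M}|\nabla_{t}P|\,d\mu_{\M}=t\,N,
\]
which is exactly the estimate carried out for $\nabla_{t}P$ in Theorem~\ref{thm_3}, now with one extra derivative: multiplying by the smooth weights $\prod\|u_{i}^{I}\|^{2}$ as in the proof of Corollary~\ref{cor3.6} turns $D^{2}P$ into a vector of genuine polynomials of degree $\lesssim t$, a Bernstein inequality costs the single power of $t$, and the Cauchy estimates on the complexification $X$ together with Lemma~\ref{lem_3} and the multiplicity bound convert the cellwise suprema into $\int_{\M}|\nabla_{t}P|$. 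Feeding this back, $\mathrm{(I)}+\mathrm{(II)}\le C\,(N^{-1/d}t)\,\rho N\le C\,A^{-1/d}\rho N$, so for $A$ large enough $\sum_{i}P(x_{i}(P))\ge\rho N\bigl(\tfrac{1}{K_{\M}}-\tau-C A^{-1/d}\bigr)>0$.

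The main obstacle is precisely this last bound on the second order remainder: one must extract the full smallness factor $N^{-1/d}t\asymp a$, and for that $D^{2}P$ has to be compared with $\nabla_{t}P$ (one derivative, one power of $t$) rather than with $P$ (which would cost two powers of $t$ and, after a Poincar\'e step, be too lossy to beat the leading term). The weight trick of Corollary~\ref{cor3.6} is what makes $D^{2}P$ accessible to the polynomial Bernstein and to the complexification machinery. The two structural choices that make the scheme run are the capped normalization $v_{i}/\max(|v_{i}|,\tau)$, which secures continuity of $P\mapsto x_{i}(P)$ where the gradient vanishes, and the centering at the centroid, which removes the first order obstruction hidden in $\mathrm{(I)}$.
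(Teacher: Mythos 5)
Your construction is a genuinely different route from the paper's. The paper does not make a one\nobreakdash-shot displacement: it flows each $x_i$ along the capped gradient field $X_P=\nabla_t P/U_\epsilon(|\nabla_t P|)$ for a time $s_0=3K_\M^2\|\mathcal R\|$ and writes $\frac1N\sum_i P(x_i(P))$ as the initial sum plus $\int_0^{s_0}\frac{d}{ds}[\cdots]\,ds$. The whole point of that design is that the integrand $\frac1N\sum_i|\nabla_t P(y_i(P,s))|^2/U_\epsilon(|\nabla_t P(y_i(P,s))|)$ involves only \emph{first} derivatives of $P$, evaluated at the moved points $y_i(P,s)$, which still lie within $O(\|\mathcal R\|)$ of their cells and hence still satisfy Corollary~\ref{cor3.6} (this is the ``perturbed partition'' remark in the proof). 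The initial sum is of the \emph{same} order $K_\M\|\mathcal R\|$ as the gain, and is beaten simply by flowing for a sufficiently large constant multiple of $\|\mathcal R\|$. No second\nobreakdash-derivative estimate ever appears. Your scheme instead pays for the one\nobreakdash-shot move with a second\nobreakdash-order Taylor remainder, and compensates for the small step size $\rho\lesssim N^{-1/d}$ by killing the first\nobreakdash-order part of the initial sum with the centroid choice. Both mechanisms are coherent, but yours transfers all the difficulty onto the Hessian bound, which is exactly what the paper's argument is engineered to avoid.

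That Hessian bound is the one place where your proposal has a real gap: the inequality $\sum_i\sup_{R_i}|D^2P|\lesssim t\,N\int_\M|\nabla_t P|\,d\mu_\M$ is \emph{not} ``exactly the estimate carried out in Theorem~\ref{thm_3}''; it is a new Bernstein\nobreakdash-type inequality that the paper never proves. Moreover, as literally stated for the ambient Hessian it is false: neither $D^2P$ nor $\int_\M|\nabla P|$ is controlled by $\int_\M|\nabla_t P|$ (take $P=p_1$, one of the defining polynomials, for which $\nabla_t P\equiv0$ on $\M$ while $\nabla P$ and $D^2P$ need not vanish). The object that actually enters your Taylor expansion of $P\circ\exp_{z_i}$ is the covariant Hessian of $P|_\M$, equivalently a derivative of the weighted tangential gradient $\bigl(\prod_i\|u_i^I\|^2\bigr)\nabla_t P$, which \emph{is} a polynomial vector of degree $\lesssim t$; applying the Cauchy estimate on $X$ and Lemma~\ref{lem_3} to \emph{its} components (not to $P$) does yield the right bound, at the cost of redoing the patching over the index sets $I$ and relating the ambient derivative of that vector to the intrinsic Hessian. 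So the sketch is repairable, but this step is the actual work and needs to be written out. Two smaller points: the claim that the Riemannian centroid $z_i$ satisfies $B(z_i,\rho)\subset R_i$ does not follow from \eqref{eq_10} (the inner ball of $R_i$ need not be centered at the centroid), though this is harmless since, as in the paper, the Marcinkiewicz--Zygmund inequalities tolerate evaluation points within $O(\|\mathcal R\|)$ of the cells; and note that the centroid trick is not optional in your scheme --- without it, term $\mathrm{(I)}$ is of order $\|\mathcal R\|N$, comparable to your gain $\rho N$, and you cannot enlarge $\rho$ the way the paper enlarges $s_0$ while keeping $x_i(P)$ inside $R_i$.
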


%===============================================================================
%===============================================================================
%===============================================================================

\begin{proof}
Let $A=A(\M)>0$ be given by Corollary~\ref{cor3.6}. Let $N\ge A t^d$ and 
$\mathcal{R} = \{R_{1},\ldots,R_{N}\}$ 
be an area regular partition of $\M$ as in \eqref{eq_10}.

Given a polynomial $P$, we define in $\M$ the vector field $X_P = \nabla_t 
P/U_\epsilon (| \nabla_t P|)$, where $U_\epsilon:\R^+\to \mathbb{R}$ is a smooth 
increasing function such that $U_\epsilon(x) = \epsilon/2$ if $0\le x \le 
\varepsilon/2$ and $U_\epsilon(x)=x$ if $x\ge\varepsilon$ for some $\epsilon$ 
fixed. Since $U_\epsilon(x)$ is smooth, the vector field $X_P$ is smooth on $\M$ 
and depends continuously on $P$.

Now for each $1 \leq i \leq N$ we consider the map 
$y_{i}: 
[0, \infty) \longrightarrow \M$ that satisfies the differential equation
\begin{equation}
\label{eq_14}
\begin{cases} 
\frac{\partial}{\partial s}y_{i}(s) = X_P(y_{i}(s)) \\
y_{i}(0) = x_{i}
\end{cases}
\end{equation}
 where $x_{i} \in R_{i}.$
The differential equation changes for each $P \in \P$, thus we will sometimes 
denote $y_i(s)$ as $y_i(P, s)$ to stress the dependence on $P$.
\medskip

\begin{remark}
Note that the quantity $\sum_{i=1}^{N} 
P(x_{i})$ is small since  $\int_{\M} P(x) d\mu(x) = 
0 $. 
In order to increase this quantity, we move from the point $x_{i}$ in the 
direction that increases $P(x_{i})$, that is, the direction 
given by the vector $\nabla_{t}P(x_{i})$.
\end{remark}

Since the vector field $X_P$ is smooth, each $y_{i}$ is well defined and 
continuous in both $P$ 
and $s$. For a fixed $s_0>0$ to be determined, define the continuous mapping
\begin{equation}
\label{eq_17}
\mathcal{P}_t^0\ni P\mapsto \left( 
x_{1}(P) ,\ldots, x_{N}(P)
\right)
= \left( 
y_{1}\left(P, s_0 \right) ,\ldots,
y_{N}\left(P,s_0 \right) 
\right).
\end{equation}

%===============================================================================
%===============================================================================
%===============================================================================

Now, following \cite{BRV13} we split 
\begin{align}			\label{particion}
\frac{1}{N} \sum_{i=1}^{N} P(x_{i}(P)) & 
= \frac{1}{N} \sum_{i=1}^{N} P(x_{i}) 
+\int_{0}^{s_0} \frac{d}{d s} 
\left[ \frac{1}{N} \sum_{i=1}^{N} 
P\left(y_{i}\left(P,s \right)\right)  \right]ds \nonumber
\\
&
\ge 
\int_{0}^{s_0} \frac{d}{d s} 
\left[ \frac{1}{N} \sum_{i=1}^{N} 
P\left(y_{i}\left(P,s \right)\right)   \right]ds-\left| \frac{1}{N} 
\sum_{i=1}^{N} P(x_{i})  \right|
\end{align}

Observe that if $x_i'$ belongs to a ball $B(C_\M \| \mathcal{R}\|)$ containing 
$R_i,$ where $C_\M>0$ is a constant depending only on $\M$ then, defining 
\[
R_i'=(R_i\setminus \cup_{j=1}^N\{x_j'\}) \cup \{x_i'\},
\]
we get an area regular partition with the same properties, i.e. 
\[
B (c_1' N^{-1/d}) \subset R_i' \subset B (c_2' N^{-1/d}),
\]
for some constants $c_1',c_2'$ depending only on $\M$.

%===============================================================================
%===============================================================================
%===============================================================================

As in \eqref{ineq1} and using that $P\in \mathcal{P}_t^0$ has mean zero, 
we get
\[
\left| \frac{1}{N} \sum_{i=1}^{N} P(x_{i}) \right| \le  
\frac{\| \mathcal{R} \|}{N} \sum_{i=1}^N |\nabla_t P(x_i')|,
\]
where $x_i'$ is a point in the ball $B(c_2' N^{-1/d})$ containing $R_i$ where 
$|\nabla_t P(x)|$ attains its maximum. Applying Corollary~\ref{cor3.6} we get 
that
\[
\left| \frac{1}{N} \sum_{i=1}^{N} P(x_{i}) \right|\le K_\M \|\mathcal{R}\|.
\]

For any fixed $0<s<C_\M \| \mathcal{R}\|$ we apply the remark above about the  
area regular partition and Corollary~\ref{cor3.6} to get
\begin{align*}
\frac{d}{d s} & \left[ \frac{1}{N}\sum_{i=1}^{N} P\left(y_{i}\left(P,s 
\right)\right)  \right]ds
=\frac{1}{N}\sum_{i=1}^{N} \frac{|\nabla_t P(y_{i} 
(P,s))|^2}{U_\epsilon(|\nabla_t P(y_{i}(P,s))|)}  
\\
\ge
&
\frac{1}{N}\sum_{i\;:\;|\nabla_t P(y_{i}(P,s))|\ge \epsilon} |\nabla_t 
P(y_{i}(P,s))|\ge \frac{1}{N} \sum_{i=1}^{N} |\nabla_t P(y_{i}(P,s))|-\epsilon
\\
\ge
&
\frac{1}{K_\M}-\epsilon.
\end{align*}

So, finally taking $s_0=3 K_\M^2 \| \mathcal{R}\|$ and 
$\epsilon=\frac{1}{2K_\M}$ we get from \eqref{particion}
\[
\frac{1}{N} \sum_{i=1}^{N} P(x_{i}(P))\ge \frac{K_\M \| \mathcal{R} 
\|}{2}>0.
\]

\end{proof}

%============================================================================
%============================================================================
%============================================================================

\begin{proof}[Proof of Theorem~\ref{thm_6}]
Fix $t$ and define
\[
\Omega =\left\lbrace P \in \P_t^0 : \int_{\M} | \nabla_{t} P(x)| d\mu_{\M}(x) < 
1 \right\rbrace,
\]
which is clearly an open, bounded subset of $\P_t^0$ such that $0 \in  \Omega 
\subset \P_t^0$.
Take $N\ge A t^d$ for $A$ given by the previous lemma and let $x_i(P)$ be the 
points defined for $P\in \partial \Omega$.

From the continuity of 
$P \mapsto (x_{1}(P),\ldots,x_{N}(P))$ it follows that 
\[
\mathcal{P}_t^0 \ni P\mapsto \sum_{i=1}^N K_{x_i(P)},
\]
is continuous
and from Lemma~\ref{prop_4}, for all $P\in\partial \Omega$, 
\[
\bigl\langle P, \sum_{i=1}^N K_{x_i(P)}\bigr\rangle=\sum_{i=1}^N P(x_i(P))>0.
\]

Applying Theorem~\ref{thm_1}
we get the existence of some $Q\in \Omega$ for which
$\sum_{i=1}^N K_{x_i(Q)}=0.$

\end{proof}

%===============================================================================
%===============================================================================
%===============================================================================
%===============================================================================

\begin{bibdiv}

\begin{biblist}

\bib{A88}{article}{
   author={Arias de Reyna, J.},
   title={A generalized mean-value theorem},
   journal={Monatsh. Math.},
   volume={106},
   date={1988},
   number={2},
   pages={95--97},
   issn={0026-9255},
}

\bib{MR2033722}{article}{
   author={Bachoc, C.},
   author={Bannai, Ei.},
   author={Coulangeon, R.},
   title={Codes and designs in Grassmannian spaces},
   journal={Discrete Math.},
   volume={277},
   date={2004},
   number={1-3},
   pages={15--28},
   issn={0012-365X},
}
		
\bib{BCN02}{article}{
   author={Bachoc, C.},
   author={Coulangeon, R.},
   author={Nebe, G.},
   title={Designs in Grassmannian spaces and lattices},
   journal={J. Algebraic Combin.},
   volume={16},
   date={2002},
   number={1},
   pages={5--19},
   issn={0925-9899},
}

\bib{BD79}{article}{
   author={Bannai, E.},
   author={Damerell, R. M.},
   title={Tight spherical designs. I},
   journal={J. Math. Soc. Japan},
   volume={31},
   date={1979},
   number={1},
   pages={199--207},
   issn={0025-5645},
}

\bib{BD80}{article}{
   author={Bannai, E.},
   author={Damerell, R. M.},
   title={Tight spherical designs. II},
   journal={J. London Math. Soc. (2)},
   volume={21},
   date={1980},
   number={1},
   pages={13--30},
   issn={0024-6107},
}

\bib{BB09}{article}{
  title = {A survey on spherical designs and algebraic combinatorics on 
          spheres},
  journal = {European Journal of Combinatorics },
  volume = {30},
  number = {6},
  pages = {1392--1425},
  year = {2009},
  note = {Association Schemes: Ideas and Perspectives },
  issn = {0195-6698},
  url = {http://www.sciencedirect.com/science/article/pii/S0195669808002400},
  author = {Bannai, Ei.},
  author = {Bannai, Et. }
}

\bib{BEG16}{article}{
   title = {Quasi Monte Carlo integration and kernel-based function 
	    approximation on Grassmannians},
  author = {Breger, A.},
  author = {Ehler, M.},
  author = {Gr\"af, M.},
  journal = {Preprint arXiv:1605.09165},
  year = {2016} 
}

\bib{berman2015sampling}{article}{
  title = {Sampling of real multivariate polynomials and pluripotential theory},
  author = {Berman, R. J.},
  author = {Ortega-Cerd{\`a}, J.},
  journal = {To appear in Amer. Jour. Math. Preprint arXiv:1509.00956},
  year = {2015}
}

\bib{BRV13}{article}{
  title = {Optimal asymptotic bounds for spherical designs},
  author = {Bondarenko, A.},
  author = {Radchenko, D.},
  author = {Viazovska, M.},
  journal = {Annals of mathematics},
  volume = {178},
  number = {2},
  pages = {443--452},
  year = {2013},
  publisher = {Princeton University}
}

\bib{BRV15}{article}{
  author={Bondarenko, A.},
  author={Radchenko, D.},
  author={Viazovska, M.},
  title={Well-Separated Spherical Designs},
  journal={Constructive Approximation},
  year={2015},
  volume={41},
  number={1},
  pages={93--112},
  issn={1432-0940},
}

\bib{Brauchart2015293}{article}{
  title = {Distributing many points on spheres: Minimal energy and designs },
  journal = {Journal of Complexity },
  volume = {31},
  number = {3},
  pages = {293--326},
  year = {2015},
  note = {Oberwolfach 2013},
  issn = {0885-064X},
  author = {Brauchart, J. S.   },
  author = {Grabner, P. J.}
}

\bib{cho2006topological}{book}{
  title = {Topological Degree Theory and Applications},
  author = {Cho, Y.J.},
  author = {Chen, Y.Q.},
  isbn = {9781420011487},
  series = {Mathematical Analysis and Applications},
  url = {https://books.google.ie/books?id=3Dn91zDvvOAC},
  year = {2006},
  publisher = {CRC Press}
}

\bib{DX13}{book}{
   author={Dai, F.},
   author={Xu, Y.},
   title={Approximation theory and harmonic analysis on spheres and balls},
   series={Springer Monographs in Mathematics},
   publisher={Springer, New York},
   date={2013},
   pages={xviii+440},
   isbn={978-1-4614-6659-8},
   isbn={978-1-4614-6660-4},
}

\bib{DGS77}{article}{
   author={Delsarte, P.},
   author={Goethals, J. M.},
   author={Seidel, J. J.},
   title={Spherical codes and designs},
   journal={Geometriae Dedicata},
   volume={6},
   date={1977},
   number={3},
   pages={363--388},
   issn={0046-5755},
}

\bib{Gau14}{collection}{
   author={Gautschi, W.},
   title={Walter Gautschi. Selected works with commentaries. Vol. 2},
   series={Contemporary Mathematicians},
   note={Edited by Claude Brezinski and Ahmed Sameh},
   publisher={Birkh\"auser/Springer, New York},
   date={2014},
   pages={xiv+914},
   isbn={978-1-4614-7048-9},
   isbn={978-1-4614-7049-6},
}

\bib{gigante2015diameter}{article}{
  title = {Diameter bounded equal measure partitions of Ahlfors regular metric 
	  measure spaces},
  author = {Gigante, G.},
  author = {Leopardi, P.},
  journal = {arXiv:1510.05236},
  year = {2015}
}

\bib{HL08}{article}{
   author={Hesse, K.},
   author={Leopardi, P.},
   title={The Coulomb energy of spherical designs on $S^2$},
   journal={Adv. Comput. Math.},
   volume={28},
   date={2008},
   number={4},
   pages={331--354},
   issn={1019-7168},
}

\bib{Kane15}{article}{
   author={Kane, D. M.},
   title={Small designs for path-connected spaces and path-connected
   homogeneous spaces},
   journal={Trans. Amer. Math. Soc.},
   volume={367},
   date={2015},
   number={9},
   pages={6387--6414},
   issn={0002-9947},
}

\bib{Kui95}{article}{
   author={Kuijlaars, A.},
   title={Chebyshev-type quadrature and partial sums of the exponential
   series},
   journal={Math. Comp.},
   volume={64},
   date={1995},
   number={209},
   pages={251--263},
   issn={0025-5718},
}

\bib{LS04}{article}{
   author={Lyubich, Yu. I.},
   author={Shatalova, O. A.},
   title={Isometric embeddings of finite-dimensional $l_p$-spaces over the quaternions},
   journal={Algebra i Analiz},
   volume={16},
   date={2004},
   number={1},
   pages={15--32},
   issn={0234-0852},
   translation={
      journal={St. Petersburg Math. J.},
      volume={16},
      date={2005},
      number={1},
      pages={9--24},
      issn={1061-0022},
}
}

% \bib{LS09}{article}{
%    author={Lyubich, Yu. I.},
%    title={Lower bounds for projective designs, cubature formulas and related
%    isometric embeddings},
%    journal={European J. Combin.},
%    volume={30},
%    date={2009},
%    number={4},
%    pages={841--852},
%    issn={0195-6698},
% }

\bib{MNW01}{article}{
   author={Mhaskar, H. N.},
   author={Narcowich, F. J.},
   author={Ward, J. D.},
   title={Spherical Marcinkiewicz-Zygmund inequalities and positive
   quadrature},
   journal={Math. Comp.},
   volume={70},
   date={2001},
   number={235},
   pages={1113--1130},
   issn={0025-5718},
}

\bib{FM10}{article}{
   author={Filbir, F.},
   author={Mhaskar, H. N.},
   title={A quadrature formula for diffusion polynomials corresponding to a
   generalized heat kernel},
   journal={J. Fourier Anal. Appl.},
   volume={16},
   date={2010},
   number={5},
   pages={629--657},
   issn={1069-5869},
}

\bib{FM11}{article}{
   author={Filbir, F.},
   author={Mhaskar, H. N.},
   title={Marcinkiewicz-Zygmund measures on manifolds},
   journal={J. Complexity},
   volume={27},
   date={2011},
   number={6},
   pages={568--596},
   issn={0885-064X},
}

\bib{RSZ94}{article}{
   author={Rakhmanov, E. A.},
   author={Saff, E. B.},
   author={Zhou, Y. M.},
   title={Minimal discrete energy on the sphere},
   journal={Math. Res. Lett.},
   volume={1},
   date={1994},
   number={6},
   pages={647--662},
   issn={1073-2780},
}

\bib{SZ84}{article}{
   author={Seymour, P. D.},
   author={Zaslavsky, T.},
   title={Averaging sets: a generalization of mean values and spherical
   designs},
   journal={Adv. in Math.},
   volume={52},
   date={1984},
   number={3},
   pages={213--240},
   issn={0001-8708},
,
}	

\bib{You01}{book}{
   author={Young, R. M.},
   title={An introduction to nonharmonic Fourier series},
   edition={1},
   publisher={Academic Press, Inc., San Diego, CA},
   date={2001},
   pages={xiv+234},
   isbn={0-12-772955-0},
}

\end{biblist}
\end{bibdiv}
\end{document}